\theoremstyle{plain}
\newtheorem{con}{Conjecture}
\numberwithin{equation}{section}
\newcommand{\ba}{\textnormal{Ball}}
\newcommand{\orb}{\textnormal{Orb}}
\newcommand{\RE}{\textnormal{Re}}
\definecolor{darkblue}{rgb}{0.067,0.008,0.571}
\definecolor{darkgreen}{rgb}{0.008,0.871,0.067}
\definecolor{darkred}{rgb}{0.871, 0.008,0.067}
\title{A Note on Weak Hypercyclicity and Linear Fractional Composition Operator}
\author{Arman Shokrollahi
       }
\institute{Department of Mathematics, West Virginia University, Morgantown, WV 26506, USA.
          }
\abstract{This paper discusses the existence of a sufficient condition for an operator to be weakly hypercyclic. We establish a weak hypercyclicity criterion, and thereupon we can answer questions 5.3 and 5.8 posed by Chan and Sanders in \cite{5}. Lastly, we show that for specific type of composition operator, weak hypercyclicity and hypercyclicity are equivalent.
}
\keywords{Banach space \*\ chaos \*\  composition operator
 \*\ Hardy space \*\ hypercyclicity \*\ linear operator \*\ norm hypercyclic \*\ weak hypercyclicity.}
\begin{document}
\maketitle

\section{Introduction}
In this note, we try to briefly discuss a sufficient condition in terms of norm and weakly open sets for an operator on a reflexive Banach space to be weakly hypercyclic. This gives a useful criterion for weak hypercyclicity of operators. We apply this to specific classes of weakly hypercyclic operators including bilateral weighted shifts on $\ell^p(\mathbb{Z})$ with $1\leq p<\infty$. This, in turn,  provides a large class of weakly hypercyclic bilateral shift which are not norm hypercyclic and it answers question 5.3 in \cite{5} in a different way than what Sanders presented in \cite{san}. In the last section, we show hypercyclicity and weak hypercyclicity are equivalent for a composition operator on the space $H(U)$ of all complex-valued functions holomorphic on the open unit disk $U$ or the Hardy space $H^2$ that is the collection of functions $f \in H(U)$ with $\sum_{n=1}^{\infty}| \hat{f}(n) |^2 < \infty$. So, there are other classes of operators for which norm hypercyclicity and weak hypercyclicity are equivalent and it answers question 5.8 in \cite{5}. The authors in \cite{2,3,4, 5, 7, 8, 9, 10, 11, 13} have already studied weak hypercyclicity and composition operators in various ways. Dodson \cite{dod} has also prepared a survey in this regard.

The following theorem summarizes all the necessary conditions which have been obtained for weakly hypercyclic operators. For a proof,  see \cite{5} and \cite{7}.
\begin{theorem} \label{thm1}
 Let $T$ be a bounded operator on Banach space $X$. If $T$ is a weakly hypercyclic operator, then
\begin{itemize}
\item [(i)] for every norm open set $G$ and weakly open set $W$, there is some integer $n\geq 0$ such that $T^nG \cap W \neq \emptyset$,
\item [(ii)] the set of weakly hypercyclic vectors for $T$ is norm dense in $X$,
\item [(iii)] $T^\ast$ has no eigenvalue,
\item [(iv)] every component of spectrum $T$ intersect the unit circle.
\end{itemize}
\end{theorem}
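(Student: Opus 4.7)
The plan is to address the four conditions in an order that exposes their dependencies: first the spectral parts (iii) and (iv), which admit self-contained arguments, then (ii), the core difficulty, together with (i), which follows easily from it.

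For (iii), I would argue by contradiction: assume $T^{\ast}\phi=\lambda\phi$ for some nonzero $\phi\in X^{\ast}$ and a scalar $\lambda$, pick a weakly hypercyclic $x$, and compute $\phi(T^{n}x)=\langle x,(T^{\ast})^{n}\phi\rangle=\lambda^{n}\phi(x)$. Since $\phi$ is continuous and surjective onto the scalar field, weak density of $\{T^{n}x\}$ forces $\{\lambda^{n}\phi(x)\}$ to be dense in $\mathbb{C}$: for any $c$ and $\varepsilon>0$, the set $\phi^{-1}(B(c,\varepsilon))$ is nonempty and weakly open, hence meets the orbit. But $\{\lambda^{n}\phi(x)\}$ is either $\{0\}$, lies entirely on one circle, or has moduli tending monotonically to $0$ or $\infty$, none of which is dense in $\mathbb{C}$; contradiction.

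For (iv), I would suppose some component $C$ of $\sigma(T)$ is disjoint from the unit circle and invoke the Riesz holomorphic functional calculus to produce a nonzero projection $P$ commuting with $T$ with $\sigma(T|_{PX})=C$. Since $P$ is weakly continuous, $\{T^{n}Px\}$ is weakly dense in $PX$. If $C\subset\{|z|<1\}$, then $\|T^{n}Px\|\to 0$, so $\{T^{n}Px\}\cup\{0\}$ is norm compact, hence weakly closed and countable, contradicting weak density in the uncountable space $PX$. If $C\subset\{|z|>1\}$, then $T|_{PX}$ is invertible with $\|T^{-n}|_{PX}\|\to 0$, so $\|T^{n}v\|\to\infty$ for every nonzero $v\in PX$; weak density, however, would furnish a subsequence weakly convergent to $0\in PX$, and such a subsequence must be norm bounded by Banach--Steinhaus, forcing $Px=0$. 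But then $T^{n}x\in\ker P=(I-P)X$ for every $n$, placing the orbit in a proper closed subspace and contradicting weak density of $\{T^{n}x\}$ in $X$.

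For (ii), I would start from the observation that every iterate $T^{n}x$ of a weakly hypercyclic $x$ is itself weakly hypercyclic, because its orbit differs from that of $x$ by only finitely many points and, in an infinite-dimensional Banach space, every nonempty weakly open set contains a finite-codimensional affine subspace, hence infinitely many points. This supplies a weakly dense set of weakly hypercyclic vectors. The hard step is to upgrade this to norm density: given $y\in X$ and $\varepsilon>0$, one must produce a weakly hypercyclic vector in $B(y,\varepsilon)$ via a perturbation that preserves weak density of the orbit. Once (ii) is available, (i) is immediate---choose a weakly hypercyclic $y\in G$, pick $n$ with $T^{n}y\in W$, and note $T^{n}y\in T^{n}G\cap W$. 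The real obstacle is (ii): in the norm-hypercyclic setting, Baire category on a complete metric space makes this density automatic via Birkhoff transitivity, but the weak topology on an infinite-dimensional Banach space is neither metrizable nor Baire, so the shortcut fails and one must resort to the finer perturbative arguments of \cite{5,7}.
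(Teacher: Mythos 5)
The paper itself does not prove this theorem; it imports it from \cite{5} and \cite{7}, so your attempt has to be judged on its own. Your argument for (iii) is the standard one and is correct, and (i) does follow from (ii) exactly as you say. But (ii) is precisely the part you leave unproved: you obtain weak density of the weakly hypercyclic vectors (each $T^{n}x$ is again weakly hypercyclic) and then announce that the upgrade to norm density is "the hard step" to be found in \cite{5,7}. The missing idea is Bourdon's trick: by (iii), $T^{\ast}$ has no eigenvalues, so for every nonzero polynomial $p$ the operator $p(T)$ has dense range; being weak--weak continuous with dense range, it sends the weakly dense orbit of $x$ onto the weakly dense set $\orb(T,p(T)x)$, so every $p(T)x$ with $p\neq 0$ is weakly hypercyclic. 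The set $\{p(T)x : p \ \text{a polynomial}\}$ is a linear subspace containing $\orb(T,x)$, hence weakly dense, hence \emph{norm} dense by Mazur's theorem (weak and norm closures of convex sets coincide). Convexity is what converts weak density into norm density here; without some such device the step genuinely fails, since the norm closure of a weakly dense non-convex set can be proper.

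The second half of your argument for (iv) is wrong as written. The claim that weak density of $\{T^{n}Px\}$ in $PX$ "furnishes a subsequence weakly convergent to $0$" confuses the weak closure with the weak sequential closure: the weak topology is not first countable, and the classical example $\{\sqrt{n}\,e_{n}\}\subset \ell^{2}$ has $0$ in its weak closure although no subsequence converges weakly (every subsequence is norm unbounded). Unbounded orbits \emph{can} be weakly dense --- that is the entire point of \cite{5} --- so Banach--Steinhaus cannot force $Px=0$. The correct argument in the case $\sigma(T|_{PX})\subset\{|z|>1\}$ is the nontrivial lemma of Dilworth and Troitsky \cite{7}: when $\|(T|_{PX})^{-n}\|\to 0$ the norms $\|T^{n}Px\|$ grow geometrically, and a sequence whose norms grow that fast admits a functional (built by a Hahn--Banach/biorthogonal construction) witnessing that it is not weakly dense. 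Your argument for the case $\sigma(T|_{PX})\subset\{|z|<1\}$ (norm-compact, hence weakly closed and countable closure) is fine. One further small repair: a connected component of $\sigma(T)$ need not be relatively open, so the Riesz projection must be taken over a relatively clopen subset of the spectrum that contains the component and still misses the circle; this exists because components of a compact Hausdorff space coincide with quasi-components.
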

The following theorem proposes a sufficient condition in terms of open and weakly set for an operator to be weakly hypercyclic. $\ba_N(X)$ and $\ba_N^0 (X)$ represent the subsets $\{x \in X : \| x  \| \leq N \}$ and $\{ x \in X : \| x \| < N \}$ of $X$, respectively, for any integer $N$.
\begin{theorem} \label{thm2}
Let $T$ be a bounded operator on a reflexive Banach space $X$ such that
\begin{itemize}
\item [(i)] for every norm open set $G$ and weakly open set $W$, there is some integer $n\geq 0$ such that $T^n G\cap W \neq \emptyset$,
\item [(ii)] there exists some integer $M$ such that if $G$ is a norm open set with $G \cap \ba(X)\neq \emptyset$ and $W$ is a weakly open set with $W \cap \ba (X)\neq \emptyset$, then there is some integer $n \geq 0$ such that $T^n (G \cap \ba(X))\cap W \cap \ba_M(X)\neq \emptyset$.
\end{itemize}

Then, $T$ is weakly hypercyclic.
\end{theorem}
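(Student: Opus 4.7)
The plan is to apply the Baire category theorem on $\ba(X)$ equipped with the weak topology. By reflexivity of $X$ (and since we may assume $X$ separable, as is standard in the hypercyclicity setting), $\ba(X)$ is weakly compact and metrizable, hence a Baire space in its relative weak topology. The goal is to realize a residual set of weakly hypercyclic vectors inside $\ba(X)$.

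First I would fix countable norm-dense sets $\{y_i\} \subseteq \ba(X)$ and $\{f_j\} \subseteq X^{\ast}$ and form the countable family of basic weakly open sets
\[W_{i,j_1,\ldots,j_s,m} = \bigl\{y \in X : |\langle y - y_i, f_{j_r}\rangle| < 1/m,\ r = 1,\ldots,s\bigr\},\]
each of which meets $\ba(X)$ at $y_i$; enumerate them as $\{W_k\}_{k\geq 1}$. For each $k$ set
\[A_k = \{x \in \ba(X) : T^n x \in W_k \text{ for some } n \geq 0\} = \ba(X) \cap \bigcup_{n \geq 0} T^{-n}(W_k).\]
Because each $T^n$ is weak-to-weak continuous, $T^{-n}(W_k)$ is weakly open in $X$, so $A_k$ is relatively weakly open in $\ba(X)$. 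To show $A_k$ is weakly dense, take a nonempty relatively weakly open $U \subseteq \ba(X)$, write $U = \widetilde U \cap \ba(X)$ with $\widetilde U$ weakly (hence norm) open in $X$, and apply hypothesis (ii) with $G = \widetilde U$ and $W = W_k$: this produces $n$ and $y \in U$ with $T^n y \in W_k \cap \ba_M(X)$, so $U \cap A_k \neq \emptyset$.

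By the Baire category theorem in the weakly metrizable compact space $\ba(X)$, $\bigcap_k A_k$ is nonempty; fix $x$ in this intersection. I would refine the construction so that the hit is witnessed inside a bounded set, for instance by replacing $W_k$ by the norm-open $W_k \cap \ba_{M+1}^{0}(X)$ in the definition of $A_k$ and running the Baire argument in norm inside $\ba(X)$. With the resulting norm bound $\|T^{n_k} x\| \leq M+1$, an arbitrary weakly open target $W(z, g_1, \ldots, g_s, \delta) \subseteq X$ can be matched by some $W_k$ with $y_i$ norm-close to $z$ and $f_{j_r}$ norm-close to $g_r$, and the norm bound on $\|T^{n_k} x - y_i\|$ then controls the cross-term in the estimate
\[|\langle T^{n_k} x - z, g_r\rangle| \leq \tfrac{1}{m} + \|T^{n_k} x - y_i\|\,\|g_r - f_{j_r}\| + \|y_i - z\|\,\|g_r\|,\]
forcing $T^{n_k} x \in W(z, g_1, \ldots, g_s, \delta) \subseteq W$.

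The main obstacle is precisely this last bookkeeping step: one needs the countable family $\{W_k\}$, together with the uniform norm control furnished by (ii), to witness weak density on all of $X$ and not merely on $\ba_M(X)$. In infinite-dimensional $X$ there exist weakly open sets disjoint from $\ba(X)$, and these lie outside the direct reach of (ii); for them one must invoke hypothesis (i)---which does not require $W \cap \ba(X) \neq \emptyset$---and integrate it into the same Baire framework (most naturally in the norm topology on $\ba(X)$, where $\bigcup_n T^{-n}(W_k)$ is norm-open and, by (i), norm-dense) so that a single $x$ witnesses both classes of targets simultaneously.
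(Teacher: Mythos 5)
Your overall framework (Baire category on the ball, a countable family of basic weakly open sets, density of the pullback sets via hypothesis (ii), and a norm bound on the controlled orbit points) matches the spirit of the paper's argument, but the proposal stops exactly where the real work lies, and the gap you flag in your last paragraph is not a bookkeeping issue that can be waved through: it is the central point of the proof. The orbit points you control satisfy $\|T^{n_k}x\|\leq M+1$, so the portion of the orbit you can steer lies in $\ba_{M+1}(X)$, whose weak closure is again $\ba_{M+1}(X)$; any weakly open set of the form $\{y: \RE\langle y,f\rangle > M+2\}$, $\|f\|=1$, is unreachable by that portion, so your construction as written only yields weak density in a ball, never in $X$. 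Your proposed repair --- run the Baire argument in norm on $\ba(X)$ and add the sets $\bigcup_n T^{-n}(W_k)$, which are norm open and norm dense by (i) --- does not close the gap either, because in infinite dimensions the weak topology of $X$ has no countable base, so no countable family $\{W_k\}$ of weakly open subsets of $X$ can by itself certify weak density of an orbit; you would still need a mechanism converting an \emph{arbitrary} weakly open target into one of your countably many conditions.

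The paper supplies precisely that mechanism, and it is the idea missing from your proposal: the Baire scheme is indexed by \emph{pairs} $(m,j)$, where $W_j=V_j\cap\ba_N(X)$ runs over countable bases of the weakly compact metrizable balls $\ba_N(X)$ and $m$ ranges over the integers with $T^{-m}V_j\cap\ba^0(X)\neq\emptyset$; the residual set consists of $x$ whose orbit enters $T^{-m}V_j\cap\ba_M(X)$ for every such pair (density of these conditions in $\ba(X)$ coming from (ii), exactly as in your density step). Then, given an arbitrary weakly open $W$, one applies (i) with $G=\ba^0(X)$ to get $m$ with $T^m\ba^0(X)\cap W\neq\emptyset$, picks $N>M\|T\|^m$, and chooses a basic $W_j\in\Gamma_N$ with $W_j\subseteq W\cap\ba_N(X)$ and $T^{-m}V_j\cap\ba^0(X)\neq\emptyset$; if $T^nx\in T^{-m}V_j\cap\ba_M(X)$, then $T^{n+m}x\in V_j$ and $\|T^{n+m}x\|\leq\|T\|^mM<N$, so $T^{n+m}x\in V_j\cap\ba_N(X)\subseteq W$. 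It is this interplay --- hypothesis (i) supplying the time shift $m$, the bound $M$ from (ii) cooperating with $\|T\|^m$ to keep the shifted orbit point inside the ball $\ba_N(X)$ on which $W$ is metrizably approximated --- that lets a single residual vector hit every weakly open set, including those disjoint from $\ba(X)$. Since your proposal names this obstacle but offers no concrete construction resolving it, the proof is incomplete at its decisive step.
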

\begin{proof}
The reflexivity of $X$ and Alaoglu's Theorem imply that, for every integer $N$, $\ba_N(X)$ is weakly compact and metrizable. Let $\Gamma_N$ be a collection of countable bases of $\ba_N(X)$, then $\Gamma = \bigcup \Gamma_N$ is also countable. Suppose $\Gamma = \{W_1,W_2, \cdots, W_N, \cdots \}$. For each $W_j$, there is a weakly open set $V_j$ and an integer $N\geq 1$ such that $W_j = V_j \cap \ba_N(X)$. Let $A=\left\{ (m,j) :T^{-m}V_j \cap \ba^0(X) \neq \emptyset \right\}$. $A$ is nonempty. In fact, condition (i) ensures that there is some integer $m$ such that $(m,j)\in A$ for every $j$. Note that by condition (ii), for every norm open set $G$ and $(m,j) \in A$, there is some integer $n$ such that the set
  $T ^n (G \cap \ba(X)) \cap T^{-m} V_j \cap \ba_M(X)$ is nonempty,  and consequently
  $G \cap \ba(X) \cap T^{-n} \left[  T^{-m} V_j \cap \ba_M(X) \right] \neq \emptyset$. So, the set
  $\bigcup_{n\in \mathbb {N}}T^{-n} \left[ T^{-m} V_j \cap \ba_M(X) \right]$
 is norm dense in $\ba(X)$. The Baire's Category Theorem implies that the set
  $$ WHY:= \bigcap_{(m,j)\in A} \bigcup_n T^{-n} \left[ T^{-m} V_j \cap \ba_M(X) \right] $$
 is also norm dense in $\ba(X)$. Need to be mentioned that to use the Baire's Category Theorem, it would not be difficult to show that
 $T^{-n} \left[ T^{-m} W_j \cap \ba_M(X) \right] \cap \ba(X)$ is open in $\ba(X)$.

  Now, we claim every element of the above set is weakly hypercyclic for $T$. Choosing $x$ in the last set, we show that $x$ is a weakly hypercyclic vector for $T$. For, suppose $W$ is an arbitrary weakly open set in $X$, then by condition (i), $T^m \ba^0(X) \cap W \neq \emptyset$, for some integer $m\geq 0$. There is some integer $N>M \| T \|^m$ such that $T^m \ba^0(X) \cap W \cap \ba_N(X)\neq\emptyset$ and consequently $T^{-m}(W\cap \ba_N(X))\cap \ba^0(X)\neq\emptyset$. Since $\Gamma_N$ is a basis of $\ba_N(X)$ and $\Gamma_N \subseteq \Gamma$, there exists some $W_j \in \Gamma_N$ with $W_j \cap \ba_N(X)\subseteq W\cap \ba_N(X)$. Hence, $T^{-m}(V_j\cap \ba_N(X))\cap \ba^0(X)\neq\emptyset$ when
  $T^{-m} V_j\cap \ba^0(X) \neq \emptyset$ for some $j$. It states that $(m,j)\in A$ and since $x \in WHY(T)$, there is some integer $n$ such that $T^{n+m}x \in V_j$ and $T^nx \in \ba_M(X)$ when $\| T^{n+m}x \| \leq M \| T \|^m \leq N$. So, $T^{n+m}x \in V_j \cap \ba_N(X)$ which is a subset of $W \cap \ba_N(X)$. Thus, $T^{n+m}x \in W$, and so  $\orb(T,x)$ is weakly dense in $X$, and $T$ is weakly hypercyclic. (the subject of orbits has been studied in many literature, for example see \cite{12}.)
 \end{proof}
 %
 %
\section{Weak Hypercyclicity}
\begin{theorem}[Weak Hypercyclicity Criterion]
Let $X$ be a reflexive, separable Banach space, $T \in B(X)$, and there exist two dense subsets $Y$ and $Z$ in $X$, a sequence $\{n_k \}$ of integers and positive integer $M$ such that
\begin{enumerate}
\item $T^{n_k}y\stackrel{wk}{\rightarrow}0$ for every $y \in Y$,
\item for every $y \in Y$ with $\left\|y\right\|\leq 1$, there is some integer $N$ such that $$\sup \{\left\|T^{n_k}y \right\|:k \geq N\}\leq M, $$
\item there exists a linear map $S_k :Z\rightarrow X$ such that for every $z\in Z$,
$$ S_k z\rightarrow 0, \quad \textnormal{and} \quad  T^{n_k}S_{n_k}z \rightarrow z.$$
\end{enumerate}

Then, $T$ is weakly hypercyclic.
\end{theorem}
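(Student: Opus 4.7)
The plan is to derive the weak hypercyclicity criterion from Theorem~2.2 by verifying its two conditions. In both verifications the constructive device is the classical ``hypercyclicity criterion vector'' $x_k := y + S_{n_k}z$, where $y \in Y$ approximates a chosen point $g_0$ of the given open set $G$ and $z \in Z$ lies in the given target set. Hypothesis (3) forces $x_k - y \to 0$ in norm, so $x_k \in G$ for all large $k$; simultaneously hypotheses (1) and (3) give $T^{n_k}x_k = T^{n_k}y + T^{n_k}S_{n_k}z \stackrel{wk}{\longrightarrow} 0 + z = z$. Since every weakly open set is automatically norm open and both $Y$ and $Z$ are norm dense, one may pick $z \in Z \cap W$, and then $T^{n_k}x_k$ enters the weakly open set $W$ for all sufficiently large $k$. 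This immediately establishes condition (i) of Theorem~2.2.

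For condition (ii) the same template is executed, but with care about norms. Given $G$ with $G \cap \ba(X) \neq \emptyset$, I first replace any chosen $g_0 \in G \cap \ba(X)$ by $(1-\delta)g_0 \in G \cap \ba^0(X)$ for a small $\delta > 0$, so that $\|g_0\| < 1$, and then approximate $g_0$ by $y \in Y$ with $\|y\| < 1$. Hypothesis (2) now applies to this $y$, yielding an index $N$ with $\|T^{n_k}y\| \leq M$ for every $k \geq N$. For the weakly open set $W$ with $W \cap \ba(X) \neq \emptyset$, fix $w_0 \in W \cap \ba(X)$ and choose $z \in Z \cap W$ with $\|z-w_0\| < 1$, so $\|z\| < 2$. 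For $k$ large enough that $\|S_{n_k}z\| < 1 - \|y\|$ and $\|T^{n_k}S_{n_k}z - z\| < 1$, one gets $x_k \in G \cap \ba(X)$ and
\[
\|T^{n_k}x_k\| \leq \|T^{n_k}y\| + \|T^{n_k}S_{n_k}z\| \leq M + \|z\| + 1 < M + 3,
\]
so $T^{n_k}x_k \in T^{n_k}(G \cap \ba(X)) \cap W \cap \ba_{M+3}(X)$. Taking $M' = M+3$ therefore yields condition (ii).

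The main obstacle is the simultaneous norm control on $y$ and on $T^{n_k}y$: the former must be strictly less than $1$ so that perturbing by $S_{n_k}z$ keeps $x_k$ inside $\ba(X)$, while the latter is exactly what hypothesis (2) delivers once $\|y\| \leq 1$ is secured. The standard scaling trick $g_0 \mapsto (1-\delta)g_0$ makes the first requirement routine, and the observation that every weakly open set is norm open (so that the norm density of $Z$ can be used to locate $z \in W$ with controlled norm) completes the bookkeeping. Once both hypotheses of Theorem~2.2 are in place, weak hypercyclicity of $T$ follows.
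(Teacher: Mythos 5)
Your proof is correct and follows essentially the same route as the paper: both verify conditions (i) and (ii) of Theorem \ref{thm2} using the perturbed vectors $y+S_{n_k}z$ with $y\in Y$ norm-approximating a point of $G$ and $z\in Z\cap W$, invoking hypothesis (2) for the uniform bound in condition (ii). Your bookkeeping is in fact a bit cleaner than the paper's (whose write-up swaps the roles of $G$ and $W$ and obtains the constant $M+1$ where you get $M+3$), but the underlying argument is identical.
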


\begin{proof}
We apply the preceding theorem (Theorem \ref{thm2}). Suppose $G$ is a norm open set and $W$ is a weakly open set. Choose $z \in G\cap Z$, $y \in W\cap Y$, and let $y_k = y+S_k z$, then $y_k\rightarrow y$ in norm topology while $T^{n_k} y_k\stackrel{wk}{\rightarrow}z$. It gives condition (i) of Theorem 1.2.\\
Again suppose $G$ is a norm open set and $W$ is a weakly open set with $W \cap \ba(X)\neq\emptyset$ and $G\cap \ba(X)\neq\emptyset$. Note that $W\cap \ba^0 (X)$ and $G\cap \ba^0 (X)$ are also nonempty. Let $z\in G\cap Y\cap \ba^0(X)$ and $y\in W\cap Y\cap \ba^0(X)$, and $y_k$ be the above sequence in the last paragraph. Then for sufficiently large $k$, $\left\|y_k\right\|\leq 1$, $\left\|T^{n_k}y_k\right\|\leq M+1$ and $y_k\rightarrow y$, $T^{n_k}y_k \stackrel{wk}{\rightarrow} z$ as $k\rightarrow \infty$. It states $T^n G\cap W\cap \ba_{M+1}(X)\neq\emptyset$. Thus, the necessary conditions of part (ii) of Theorem 1.2 are fulfilled and the proof is completed.
\end{proof}

\section{The Unilateral and Bilateral Weighted Shift}
Let $\{e_j : j\in \mathbb{N} \}$ be the canonical basis of $\ell^p(\mathbb{N})$. Then, the operator $T: \ell^p(\mathbb{N}) \rightarrow \ell^p(\mathbb{N})$ defined by $T(e_j)=w_j e_{j-1}$ for $j\geq 2$ and $T(e_1)=0$, for some positive and bounded sequence $\{w_j : j\in \mathbb{N}\}$, is called a \textit{unilateral backward shift}. Also, if $\{ e_j : j\in  \mathbb{Z} \}$ is the standard basis of $\ell^p(\mathbb{Z})$, then we define the bilateral backward shift $T$ on $\ell^p(\mathbb{Z})$ by $T(e_j)=w_j e_{j-1}$ for all $j \in \mathbb{Z}$ and for some positive and bounded weights $\{w_j : j \in \mathbb{Z} \}$.  \\

Salas \cite{13} introduced the norm hypercyclic unilateral and bilateral weighted shift in terms of the sequence of their weights.

\begin{theorem}[Salas Theorem \cite{13}]
\begin{enumerate}
\item The unilateral weighted shift $T$ with weight sequence $\{w_j : j\geq 1\}$ is hypercyclic if and only if $\sup \{w_1 w_2 \cdots w_n : n \geq 1 \} = \infty$.
\item The bilateral weighted shift $T$ with weight sequence $\{w_j : j \in \mathbb{Z}\}$ is hypercyclic if and only if for any given $\epsilon >0$ and $q\in \mathbb{N}$, there exists an arbitrary large $n$ such that for all $| j | < q$,
$$ \prod_{s=1}^n w_{j+s} > \frac{1}{\epsilon}, \quad {\textnormal{and}} \quad \prod_{s=0}^{n-1} w_{j-s} < \epsilon.$$
\end{enumerate}
\end{theorem}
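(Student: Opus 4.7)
The plan is to deduce both parts from Birkhoff's transitivity theorem via the classical Hypercyclicity Criterion. In each setting I fix the dense linear subspace $D=\textnormal{span}\{e_j\}$ of finitely supported sequences. On $D$ the forward iterates $T^n y$ vanish eventually in the unilateral case, and in both cases the formal right inverse $S$ defined by $Se_k=e_{k+1}/w_{k+1}$ satisfies $TS=I$ with the explicit formula $S^n e_k=e_{k+n}/(w_{k+1}\cdots w_{k+n})$. The task therefore reduces to extracting a subsequence $(n_j)$ along which $S^{n_j}y\to 0$ (and, in the bilateral case, $T^{n_j}y\to 0$ as well) for every $y\in D$.

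For part (1), I would first handle necessity by a direct coordinate computation: the first coordinate of $T^n x$ equals $x_{n+1}(w_2 w_3\cdots w_{n+1})$, and density of the orbit forces this to exceed any prescribed bound for suitable $n$; since $|x_{n+1}|\leq \|x\|$ is bounded, one immediately obtains $\sup_n w_1\cdots w_n=\infty$. For sufficiency, the hypothesis produces an increasing sequence $(n_j)$ with $w_1\cdots w_{n_j}\to\infty$, which I would then thin further by a diagonalization (using boundedness of the weights) to obtain a common subsequence along which $w_{k+1}\cdots w_{k+n_j}\to\infty$ for every fixed $k$, that is, $S^{n_j}y\to 0$ on $D$. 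The Hypercyclicity Criterion now applies.

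Part (2) follows the same scheme on $\ell^p(\mathbb{Z})$. The two displayed inequalities are tailored exactly to the two convergence conditions of the criterion: $\prod_{s=0}^{n-1}w_{j-s}<\epsilon$ says $\|T^n e_j\|<\epsilon$ and $\prod_{s=1}^n w_{j+s}>1/\epsilon$ says $\|S^n e_j\|<\epsilon$, both uniformly for $|j|<q$. Letting $\epsilon\to 0$ with $q$ prescribed supplies the desired subsequence. Necessity is again a coordinate calculation, reading off the constraints on $x$ from the requirement that $T^n x$ approximate arbitrary finite linear combinations $\sum_{|j|<q}c_j e_j$. The main obstacle I anticipate lies in the unilateral sufficiency step, since $\sup_n w_1\cdots w_n=\infty$ does not by itself guarantee that $w_1\cdots w_{n+k}\to\infty$ along the same subsequence for every fixed $k\ge 0$; this is precisely where boundedness of the weight sequence enters the diagonalization. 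In the bilateral case no such refinement is needed, because the stated condition already bundles the required uniformity across $|j|<q$ into its statement.
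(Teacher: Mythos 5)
The paper does not actually prove this theorem: it is quoted from Salas \cite{13} as background, so there is no in-paper argument to compare yours against. Your proposal follows the standard modern route (the Hypercyclicity Criterion applied to the dense set of finitely supported vectors, with $S$ the formal right inverse), which is a legitimate way to establish both parts, and you correctly locate the one genuinely delicate point in the unilateral sufficiency.

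As literally stated, though, that step has a gap. Writing $P_n=w_1\cdots w_n$, the criterion requires a single sequence $(n_j)$ with $P_{k+n_j}\to\infty$ for every fixed $k$, and you propose to obtain it by ``thinning'' a sequence along which $P_{n_j}\to\infty$. Thinning cannot work: if the weights are chosen so that $P_n$ is large only at isolated indices $a_1<a_2<\cdots$ (take $w_{a_i+1}$ tiny, which is allowed since the weights need only be positive and bounded \emph{above}), then any sequence with $P_{n_j}\to\infty$ eventually lies in $\{a_i\}$ and satisfies $P_{n_j+1}\leq 1$, so no subsequence handles $k=1$. The correct use of boundedness goes the other way: with $C=\sup_j w_j$ one has $P_{m-i}\geq P_m/C^{i}$, so a large value of $P$ at $m$ forces $P$ to be large on the whole block $m-i,\dots,m$; choosing indices $m_j$ with $P_{m_j}\geq jC^{j}$ and setting $n_j:=m_j-j$ (an index shift, not a subsequence extraction) yields $P_{k+n_j}\geq j$ for all $j\geq k$. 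With that repair, part (1) goes through. In part (2), your identification of the two displayed products with $\|T^n e_j\|$ and $\|S^n e_j\|$ is exactly right and sufficiency is immediate; for necessity you should make explicit that one must first choose the hypercyclic vector $x$ itself within $\delta$ of $\sum_{|j|<q}e_j$ (possible because hypercyclic vectors are dense) before selecting $n$ with $T^nx$ close to that same vector --- otherwise the coordinates $x_j$ for $|j|<q$ could vanish and the backward products $\prod_{s=0}^{n-1}w_{j-s}$ would not be controlled.
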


The authors in \cite{5} (Theorem 4.1) showed that $\sup \{w_1 w_2 \cdots w_n : n\geq1 \} = \infty$ is equivalent to weak hypercyclicity, hence, weakly hypercyclic unilateral weighted shift is norm hypercyclic. They also introduced a sufficient condition for a bilateral weighted shift on $\ell^p(\mathbb{Z})$ with $2 \leq p <\infty$ to be weakly hypercyclic. This condition is weaker than Salas' condition and more difficult to state. Someone might claim that the following conjecture provides a sufficient condition for weak hypercyclicity of bilateral weighted shift, based on weak hypercyclicity criterion.

\begin{con}
The bilateral weighted shift $T(e_j)=w_je_{j-1}$ on $\ell^p(\mathbb{Z})$ with $1\leq p <\infty$ is weakly hypercyclic if and only if there exists some sequence $\{n_k\}$ of integers such that
\begin{enumerate}
\item $\sup \{w_jw_{j-1} \cdots w_{j-n_k+1}:k\geq 1,j\in \mathbb{Z}\}<\infty$,
\item for all $j\in \mathbb{Z}$, $w_jw_{j+1} \cdots w_{j+n_k} \rightarrow \infty$ as $k \rightarrow \infty$.
\end{enumerate}
\end{con}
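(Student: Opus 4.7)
The plan is to treat the biconditional in two stages. Sufficiency (the ``if'' direction) should follow directly from the Weak Hypercyclicity Criterion of Theorem~2.1; necessity (the ``only if'' direction) is more delicate and requires a careful analysis of a weakly convergent subsequence of the orbit of a hypercyclic vector.

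For sufficiency, I would take $Y=Z=c_{00}(\mathbb{Z})$, the dense subspace of finitely supported sequences in $\ell^{p}(\mathbb{Z})$, and define the right-inverse maps
\[
S_{k}(e_{j}) \;=\; \frac{1}{w_{j+1}\,w_{j+2}\cdots w_{j+n_{k}}}\,e_{j+n_{k}},
\]
extended linearly. A direct computation gives $T^{n_{k}}S_{k}e_{j}=e_{j}$, hence $T^{n_{k}}S_{k}z=z$ identically on $c_{00}$, while $\|S_{k}e_{j}\|\to 0$ by condition~(2), so $S_{k}z\to 0$ in norm. Hypothesis~(1) of the Criterion holds because, for $y=e_{j}$, $T^{n_{k}}e_{j}=(w_{j}w_{j-1}\cdots w_{j-n_{k}+1})e_{j-n_{k}}$ has scalar coefficient bounded by condition~(1) and support shifting off to $-\infty$, so it tends to $0$ coordinatewise and therefore weakly in the reflexive range $1<p<\infty$. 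The same uniform bound yields $\|T^{n_{k}}y\|\le C\|y\|$ for every $y\in c_{00}$, establishing hypothesis~(2) of the Criterion with $M=\lceil C\rceil$.

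For necessity, let $x$ be a weakly hypercyclic vector for $T$, and for each $q\ge 1$ pick $n_{q}$ so that $T^{n_{q}}x$ lies in the weakly open neighborhood of $e_{0}$ defined by the coordinate functionals $e_{m}^{\ast}$ with $|m|<q$. Along this sequence the Banach--Steinhaus theorem forces $\sup_{k}\|T^{n_{k}}x\|<\infty$, and the explicit formula $(T^{n_{k}}x)_{m}=x_{m+n_{k}}\prod_{s=1}^{n_{k}}w_{m+s}$ then gives on the one hand $x_{n_{k}}\prod_{s=1}^{n_{k}}w_{s}\to 1$ (from the weak convergence to $e_{0}$), and on the other hand a uniform bound on $|x_{m+n_{k}}|\prod_{s=1}^{n_{k}}w_{m+s}$ for every $m$. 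Since $x\in\ell^{p}$ forces $x_{j+n_{k}}\to 0$ for each fixed $j$, one should conclude $\prod_{s=1}^{n_{k}}w_{j+s}\to\infty$ after translating indices, which is condition~(2). Condition~(1) would then be obtained by combining the norm bound on $T^{n_{k}}x$ with the norm-density of weakly hypercyclic vectors (Theorem~1.1(ii)), in order to replace $x$ by hypercyclic vectors whose support is adapted to each coordinate $j$ in turn, transferring the uniform bound on $\|T^{n_{k}}x\|$ to a uniform bound on $\|T^{n_{k}}e_{j}\|$.

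The main obstacle is the necessity direction, and specifically the passage from the pointwise control $|x_{m+n_{k}}|\prod_{s=1}^{n_{k}}w_{m+s}\le C$ to the uniform bound on $\prod_{s=1}^{n_{k}}w_{m+s}$ alone demanded by condition~(1); the coefficients $|x_{m+n_{k}}|$ may absorb arbitrary growth of the weights without violating norm boundedness. Resolving this will require either a quantitative refinement of the density of weakly hypercyclic vectors, yielding hypercyclic vectors with controlled decay of coordinates, or the observation that the conjecture's necessity is in fact too strong and that a counterexample can be extracted from the weakly--but not norm--hypercyclic shifts in~\cite{san}; in that latter case condition~(1) would need to be replaced by a weaker $\ell^{p}$-summability statement, and the conjecture would survive only as a sufficient criterion, obtained from the Weak Hypercyclicity Criterion exactly as in the sufficiency argument sketched above.
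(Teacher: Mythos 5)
Your proposal sets out to prove the biconditional, but the paper states this conjecture only to refute it: the two conditions on the weights are mutually inconsistent, so no weight sequence whatsoever satisfies both, and since weakly hypercyclic bilateral shifts do exist (Chan--Sanders), the ``only if'' direction fails. The inconsistency is a three-line computation you should have run before attempting either direction. Let $\alpha:=\sup\{w_jw_{j-1}\cdots w_{j-n_k+1}:k\ge 1,\ j\in\mathbb{Z}\}<\infty$ from condition~(1). By condition~(2) with $j=0$, for any $N$ there is $k$ with $w_0w_1\cdots w_{n_k}>\alpha N$. But condition~(1) applied at $j=n_k$ gives $w_{n_k}w_{n_k-1}\cdots w_1\le\alpha$, whence $\alpha N< w_0\,(w_1\cdots w_{n_k})\le w_0\alpha$ and so $w_0>N$ for every $N$ --- a contradiction. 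In particular your sufficiency argument via the Weak Hypercyclicity Criterion, while formally harmless, is vacuous: its hypotheses are never satisfiable.

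You do sense in your final paragraph that necessity is ``too strong'' and that the conjecture might survive only as a sufficient criterion, which is the right instinct, but you locate the obstruction in the wrong place (the passage from $|x_{m+n_k}|\prod_{s}w_{m+s}\le C$ to a bound on the product alone). The actual defect is structural: condition~(1) bounds backward products uniformly in $j$, including at $j=n_k$, where the backward product is exactly the forward product that condition~(2) forces to infinity. No refinement of the density of weakly hypercyclic vectors, and no counterexample from the literature, is needed --- the right-hand side of the equivalence is empty.
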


This conjecture is false, because there does not exist a shift that satisfies both conditions (1) and (2) of the conjecture. For a proof, suppose there is an increasing sequence $(n_k)$ satisfying both conditions. Let  $\alpha := \sup \{w_j w_{j-1} \cdots w_{j-n_k+1}:k\geq 1,j\in \mathbb{Z}\} < \infty$. If $\alpha = 0$, then $w_j = 0$ for some $j$, and so condition (2) fails to hold. Assume $\alpha > 0$, then by condition (2), there is $n_k$ such that
\begin{equation} \label{eq1}
w_0 w_1 \cdots w_{n_k} > \alpha N.
\end{equation}

For $j=n_k$, we have
\begin{equation} \label{eq2}
w_{n_k} w_{n_k -1} \cdots w_1 = w_j w_{j-1} \cdots w_{j-n_k +1} \leq \alpha.
\end{equation}

Combining (\ref{eq1}) and (\ref{eq2}) yields
\[
\alpha N < w_0 w_1 \cdots w_{n_k} = w_0 ( w_1 \cdots w_{n_k} ) \leq w_0 \alpha,
\]
and therefore,  $w_0 > N$. Since this inequality holds for any integer $N \geq 1$, we get $w_0 = \infty$ which is a contradiction.

\section{The Linear Fractional Composition Operator}
Every holomorphic self-map $\varphi$ of $U$ induces a linear composition operator $C:H(U)\rightarrow H(U)$ by $C(f)(z)=f(\varphi(z))$ for every $f\in H(U)$ and $z\in U$. Shapiro introduced  a complete characterization of hypercyclic operators on $H(U)$. In fact,  he showed that $C_\varphi$ is hypercyclic if and only if $\varphi$ has no fixed point in $U$. This condition as we see in the theorem below is equivalent to weak hypercyclicity as well.

\begin{theorem}
Let $\varphi$ be a holomorphic self-map on $U$. Then $C_\varphi$ is hypercyclic on $H(U)$ if and only if $C_\varphi$ is weakly hypercyclic.
\end{theorem}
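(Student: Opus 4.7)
The plan is to prove the two implications separately, with only one direction requiring real work. The forward direction is immediate: if $C_\varphi$ is (norm) hypercyclic, then some orbit is norm dense in $H(U)$, hence weakly dense, so $C_\varphi$ is weakly hypercyclic. Everything else in this proof will be devoted to the converse.

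For the converse I would leverage Shapiro's characterization cited in the paragraph preceding the theorem: $C_\varphi$ is norm hypercyclic on $H(U)$ if and only if $\varphi$ has no fixed point in $U$. Thus it suffices to show that if $C_\varphi$ is weakly hypercyclic, then $\varphi$ is fixed-point free on $U$. I would argue by contraposition and assume there exists $z_0 \in U$ with $\varphi(z_0)=z_0$. The key observation is that the evaluation functional $\delta_{z_0}: f \mapsto f(z_0)$ is continuous on $H(U)$ (since convergence in the compact-open topology implies pointwise convergence), and it satisfies
\[
\delta_{z_0}(C_\varphi f) = f(\varphi(z_0)) = f(z_0) = \delta_{z_0}(f).
\]
In other words, $\delta_{z_0}$ is a nonzero fixed functional of $C_\varphi^\ast$ (eigenvalue $1$). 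This is the analogue, in the Fr\'echet setting, of condition (iii) in Theorem \ref{thm1}.

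From here the contradiction is easy. For any $f \in H(U)$, iterating the identity above gives $(C_\varphi^n f)(z_0) = f(z_0)$ for every $n \geq 0$. Hence the entire orbit of $f$ under $C_\varphi$ lies inside the weakly closed affine hyperplane
\[
H_f = \{ g \in H(U) : g(z_0) = f(z_0)\}.
\]
Since $H_f$ is a proper subset of $H(U)$ (pick any $g$ with $g(z_0) \neq f(z_0)$; for instance a suitable constant function), the orbit of $f$ fails to be weakly dense. As this holds for every $f$, $C_\varphi$ admits no weakly hypercyclic vector, contradicting the assumption of weak hypercyclicity. Therefore $\varphi$ has no fixed point in $U$, and Shapiro's criterion closes the argument.

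There is no real obstacle here; the only point that needs care is checking that the argument is valid in $H(U)$, which is a Fr\'echet space rather than a Banach space, so Theorem \ref{thm1}(iii) cannot be invoked verbatim. I would therefore present the eigenfunctional argument directly, as sketched above, and note in passing that the same proof goes through in $H^2$ because point evaluation at $z_0 \in U$ is still a bounded linear functional there, delivering the same equivalence on the Hardy space.
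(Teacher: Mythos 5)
Your proposal is correct and follows essentially the same route as the paper: both reduce the converse to showing that a fixed point $p\in U$ forces every $C_\varphi$-orbit to lie in the proper, weakly closed set $\{g : g(p)=f(p)\}$ via the continuity of the point-evaluation functional, and then invoke Shapiro's fixed-point characterization of hypercyclicity. Your packaging of the evaluation functional as an eigenfunctional of the adjoint, and your explicit remark that $H(U)$ is a Fr\'echet space so Theorem \ref{thm1}(iii) cannot be cited verbatim, are welcome clarifications but not a different argument.
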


\begin{proof}
Clearly hypercyclicity implies weak hypercyclicity. To prove the other direction, it is enough to show that if $C_\varphi$ is weakly hypercyclic, then $\varphi$ has no fixed point in $U$. Suppose $\varphi$ has a fixed point $p\in U$ and $\orb(C_\varphi ,f)$ is weakly dense for some $f\in H(U)$. Let $g$ be an arbitrary vector in $H(U)$ and $\epsilon >0$.   The linear functional $\Lambda_p:H(U)\rightarrow \mathbb{C}$ by $\Lambda_p(f)=f(p)$ is continuous on $H(U)$, so there is some positive integer $n$ such that $\left|\Lambda_p(f \circ \varphi_n - g)\right|<\epsilon$ when $\left|f(p)-g(p)\right|< \epsilon$. It states that any function in $H(U)$ must have the value $f(p)$ at $p$. But the weakly closure of this orbit cannot be all of $H(U)$ and consequently no $C_\varphi$-orbit is weakly dense, i.e., $C_{\varphi}$ is not weakly hypercyclic.
\end{proof}

Now, suppose $\varphi$ has a linear fractional self-map of $U$ with  no fixed point in $U$. Then, we say $\varphi$ is {\em{parabolic}} if $\varphi$ has only one fixed point which must lie on the unit circle. Parabolic maps are conjugate to translations of the right half-plane into itself. Also, we say $\varphi$ is {\em{hyperbolic}}  if it has two fixed points, one of them lies on the unit circle and the other one is out of $\overline{U}$ which in the automorphism case, both the fixed points must lie on $\partial U$ \cite{4}. \\
Bourdon and Shapiro \cite{3,4} characterized the hypercyclic composition operator on Hardy space, $H^2$. The obtained results state that for a linear fractional self-map $\varphi$ of $U$, $C_\varphi$ is hypercyclic on $H^2$ unless $\varphi$ is a parabolic non-automorphism \cite{4}.

\begin{theorem}
Let $\varphi$ be a linear fractional self-map of $U$. Then $C_\varphi$ is hypercyclic on $H^2$ if and only if $C_\varphi$ is weakly hypercyclic.
\end{theorem}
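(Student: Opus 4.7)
The plan is to establish the nontrivial direction: weak hypercyclicity of $C_\varphi$ forces $\varphi$ to meet the Bourdon--Shapiro criterion for $H^2$-hypercyclicity, namely no fixed point in $U$ and not a parabolic non-automorphism. Equivalently, I would rule out each of these obstructions separately. The reverse direction (hypercyclic $\Rightarrow$ weakly hypercyclic) is automatic.

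First I would dispose of the case $\varphi(p)=p$ for some $p\in U$ by reusing the proof of the preceding theorem verbatim: the point-evaluation $\Lambda_p:H^2\to\mathbb{C}$, $\Lambda_p(f)=f(p)$, is bounded on $H^2$ with norm $(1-|p|^2)^{-1/2}$, so if $\varphi(p)=p$ then $\Lambda_p(C_\varphi^n f)=f(\varphi_n(p))=f(p)$ is constant in $n$, and no orbit can be weakly dense.

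For the remaining obstruction, suppose for contradiction that $\varphi$ is a parabolic non-automorphism with Denjoy--Wolff point $\alpha\in\partial U$. Conjugating by a Cayley-type map sending $\alpha$ to $\infty$, $\varphi$ becomes the translation $w\mapsto w+a$ on the right half-plane with $\RE a > 0$. A direct computation of the iterates then gives, for any $z_1,z_2\in U$,
$$ \varphi_n(z_1)-\varphi_n(z_2) = O(1/n^2), \qquad 1-|\varphi_n(z_j)|^2 = O(1/n), $$
the second estimate reflecting that the orbit approaches $\alpha$ nontangentially. Combining the standard reproducing-kernel derivative bound $|f'(\xi)|\leq C\|f\|_{H^2}(1-|\xi|^2)^{-3/2}$ with the mean-value inequality on the straight segment from $\varphi_n(z_1)$ to $\varphi_n(z_2)$ (convexity of $U$ keeps the segment inside $U$, and $|\xi|$ on it is at most the maximum of the two endpoint moduli), one obtains
$$ |f(\varphi_n(z_1))-f(\varphi_n(z_2))| \leq \max_\xi|f'(\xi)|\cdot|\varphi_n(z_1)-\varphi_n(z_2)| = O(n^{3/2})\cdot O(1/n^2) = O(n^{-1/2}). $$

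To close, I would pick $z_1\neq z_2$ in $U$ and any $g\in H^2$ with $g(z_1)\neq g(z_2)$, and set $\delta=|g(z_1)-g(z_2)|/3$. The weak neighborhood $W=\{h\in H^2:|h(z_j)-g(z_j)|<\delta,\ j=1,2\}$ is nonempty. Since $C_\varphi$ is weakly continuous, the standard argument shows that a weakly dense orbit must enter $W$ at arbitrarily large indices $n$, yet any $h=C_\varphi^n f\in W$ would have $|h(z_1)-h(z_2)|\geq \delta$, contradicting the $O(n^{-1/2})$ decay. Thus $\varphi$ cannot be a parabolic non-automorphism, and Bourdon--Shapiro concludes. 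The main technical hurdle is the pair of quantitative estimates on $\varphi_n$; it is worth noting that the analogous argument correctly fails in the parabolic \emph{automorphism} case, where the approach is tangential ($1-|\varphi_n(z)|^2=O(1/n^2)$) and the product becomes $O(n)$, consistent with $C_\varphi$ being hypercyclic there.
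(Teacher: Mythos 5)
Your proposal is correct and follows essentially the same route as the paper: conjugate the parabolic non-automorphism to a translation of the right half-plane, derive $|\varphi_n(z_1)-\varphi_n(z_2)|=O(n^{-2})$ together with $1-|\varphi_n(z_j)|^2\asymp n^{-1}$, combine these with the $H^2$ difference estimate to get the $O(n^{-1/2})$ decay, and conclude that weak cluster points of the orbit must take equal values at distinct points, contradicting weak density. Your explicit disposal of the interior-fixed-point case via point evaluations is the same argument the paper uses for $H(U)$ in the preceding theorem, so there is no substantive divergence.
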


\begin{proof}
It is clear that if $\varphi$ is not parabolic non-automorphism, then hypercyclicity and weak hypercyclicity for $C_\varphi$ are equivalent. Let $\varphi$ be parabolic non-automorphism, so it has only one fixed point which lies on $\partial U$. Without loss of generality, we may take this fixed point to be $+1$.  Set
$$\sigma(z)=\frac {1+z}{1-z}, \quad  \textnormal{and} \quad   \Phi =\sigma \circ \varphi \circ \sigma^{-1}.$$

Then, $\sigma$ is a linear fractional mapping of $U$ onto the open right half-plane $\mathbf{P}$, and one can easily check that $\Phi (w)=w+a$ where $\RE(a)>0$ and $w\in \mathbf P$. An easy computation shows that for each $z\in U$,
 $$ 1-| \varphi_n(z)|^2=\frac{4 \RE(\sigma (z)+na)}{| 1+\sigma(z)+na| ^2},$$ and
 $$ \varphi_n(z)-\varphi_n(0)=\frac {2(\sigma(z)-\sigma (0))}{(\sigma(z)+na+1)(\sigma(0)+na+1)}.$$

In addition, for each pair of points $z,w \in U$, and $f \in H^2$, the following estimate holds,
 $$ | f(z)-f(w)| \leq 2\left\| f \right\|\frac {| z-w |}{(\min \{1-| w| \ , \ 1-| z | \})^{3/2}}.$$
 By substituting $z$ and $w$ by $\varphi_n(z)$ and $\varphi_n(0)$, respectively,  and using the last estimate, we get
 $$\left|f(\varphi_n(z))-f(\varphi_n(0))\right|\leq \frac {M}{\sqrt n},$$
where the constant $M$ depends on $f$, $z$, and $\varphi$ (for more  details see \cite{3,4}).

Now, suppose $\orb(C_\varphi ,f)$ is weakly dense for some vector $f\in H^2$. The linear functional $\Lambda_p : H^2 \rightarrow \mathbb{C}$ by $\Lambda_p(g)=g(p)$ is bounded for every $p\in U$. Let $g$ be an arbitrary vector in $H^2$, $z\in U$, and $\epsilon >0$. There exists a large enough $n$ such that
  $$\left|f(\varphi_n(z))-f(\varphi_n(0))\right|< \frac{\epsilon}{4},$$
and
$$\left|\Lambda_z(f \circ \varphi_n-g)\right|< \frac{\epsilon}{4}, \quad  {\textnormal{and}} \quad  \left| \Lambda_0 (f \circ \varphi_n-g) \right| < \frac{\epsilon}{4}.$$

Therefore,
$$ \left| f(\varphi_n(z))-g(z) \right| < \frac{\epsilon}{4}, \quad  {\textnormal{and}}  \quad   \left| f(\varphi_n(0)) - g(0) \right| < \frac{\epsilon}{4}. $$

Thus, by the triangle inequality, we can deduce $\left| g(z)-g(0) \right| < \epsilon$, and consequently $g \equiv g(0)$.

Hence, only constant functions can be weak cluster points of the $C_\varphi$-orbit of an $H^2$ function, and thereof  $C_\varphi$ is not weakly hypercyclic. This fulfills the proof.
\end{proof}

\end{document}